\def\ZZ{{\mathbb Z}}
\def\oo{\infty}
\def\g{\gamma}
\def\G{\Gamma}
\def\a{\alpha}
\def\b{\beta}
\def\d{\delta}
\def\l{\lambda}
\newcommand{\eps}{\varepsilon}
\newcommand{\seq}{\subseteq}
\renewcommand{\k}{\kappa}
\newcommand{\s}{\sigma}
\newcommand{\ru}{\mathrm{u}}
\newcommand{\var}{\mathrm{Var}}
\newcommand{\one}{\hbox{\rm 1\kern-.27em I}}
\theoremstyle{definition}
\newtheorem{theorem}{Theorem}[section]
\newtheorem{lemma}[theorem]{Lemma}
\newtheorem{definition}[theorem]{Definition}
\newtheorem{remark}[theorem]{Remark}
\author{Jakob E. Bj\"ornberg, Erik I. Broman}
\title{Coexistence and non-coexistence\\ of Markovian 
viruses and their hosts}
\begin{document}
\maketitle
\begin{abstract}
The possibility
of coexistence of two competing populations is a classical question 
which dates back to the earliest `predator-prey' models. In this paper
we study this question
in the context of a model for the spread of a virus infection
in a population of healthy cells, introduced in~\cite{BBBN}.
The infected cells may be seen as a population
of `predators' and the healthy cells as a population
of `prey'.  We show that, depending on the parameters
defining the model, there may or may not be coexistence of
the two populations, and we give precise criteria for this.
\end{abstract}

\section{Introduction} \label{intro_sec}
We start by giving an informal description of the model studied in this paper.
It is a two-dimensional Markov process $(X(t),Y(t))_{t\geq 0}$, where
$X(t)$ is the number of `healthy cells' at time $t$, and
$Y(t)$ is the number of `infected cells' (i.e.\ cells
having virus in them). Both components 
$(X(t))_{t\geq0}$  and $(Y(t))_{t\geq0}$ behave in many ways like
branching processes, although there are dependencies between them.
A healthy cell is replaced by a random number of new healthy
cells at rate 1.  This random number is independent of
other events and drawn from a distribution
$(p_k)_{k\geq0}$;  thus the rate at which 
a healthy cell is replaced by $k$ healthy cells is
$p_k$.  Infected cells are also replaced by $k$ new (infected) cells at 
rate $p_k$ if $k\geq 1$ while they are replaced by 0
new cells (die) at the higher rate $p_0+\lambda$. 
Here $\lambda\geq 0$ is a parameter
that reflects the negative impact of the virus on the host's
lifelength.  When an infected cell dies (i.e.\! is replaced
by 0 new cells), it converts a random number of healthy cells into
infected cells.  The biological motivation is that when infected cells
die they burst (lyse) and release `free virions'
which enter a random number of 
healthy cells, thus infecting them. 
The number of conversions is independent of all other
events, and is drawn from a distribution $(\g_k)_{k\geq0}$.
Hence, the processes $(X(t))_{t\geq0}$ and $(Y(t))_{t\geq 0}$ interact
in that $(Y(t))_{t\geq0}$ `feeds' upon $(X(t))_{t\geq0}.$
The model is defined in detail in
Section~\ref{def_sec}. Also, we refer the interested reader to \cite{BBBN}
for a biological motivation of the model. We will sometimes simply write 
$X$ or $Y$ as a shorthand for $(X(t))_{t\geq0}$ and $(Y(t))_{t\geq0}$ respectively.

As described, the model is in essence a pair of interacting branching processes. 
Markov branching processes with interaction have been much studied, see for 
instance~\cite{kalinkin02} and the references within. 
The main purpose of this paper is the study of coexistence of the two, competing, populations 
$X$ and $Y$. 
Similar types of questions have been studied in many contexts. 
One recent example is the so-called 
two-type-Richardson model. This can be informally described as follows. Consider the graph ${\mathbb Z}^d,$ and 
let the two infections (red and blue) start with only one individual each. A site is infected
by the red (blue) process at a rate which equals the infection parameter 
$\lambda_r$ ($\lambda_b$) times the number 
of neighbours infected by the red (blue) process. 
Further, if a site gets infected by the red infection it stays red 
forever and similarly if it is infected by the blue infection. The main question is if they can 
coexist, i.e. if there will be two unbounded components of red and blue sites,
see for instance ~\cite{hp98,garet05,hoffman05}.

In \cite{BBBN}, much focus was on the study of the 
\emph{extinction probability} $\eta$ of the
infected process $(Y(t))_{t\geq 0}$.
There, $\eta$ was taken as an indicator
of the `evolutionary fitness' of the virus. The main result was that for fixed
$(p_k)_{k\geq0}$ and $(\g_k)_{k\geq0}$ satisfying $\g_0=0$
the extinction probability
$\eta$ is maximized when $\l=0$.
In fact, it was shown that $\eta$ is increasing in $\l$.
The main result of this paper concerns the 
\emph{coexistence probability} $\zeta.$
\begin{definition} \label{defcoex}
We call
\[
\zeta=P(X(t)Y(t)\geq1\mbox{ for all }t\geq 0).
\]
the coexistence probability of $X(t),Y(t).$ 
\end{definition}
Of course, $\zeta$ depends on the parameters used to define the
process, but we suppress this dependence in the notation.  Introducing the stopping
time $T_\ru=\inf\{t\geq0:X(t)Y(t)=0\}$, we have that
$\zeta=P(T_\ru=\infty)$. The relevance of coexistence in the study of
$\eta$ will be discussed in Section \ref{sec_app}.

The proof of our main result uses two auxiliary branching
processes $\hat X (t)$ and $\hat Y (t)$
defined and discussed in detail in Section \ref{auc_sec}.
Informally, $\hat X (t)$ is a process distributed 
as $X(t)$ without the influence of $Y(t),$ i.e.\!
$Y(0)=0.$ Furthermore,  $\hat Y (t)$ is a process distributed 
as $Y(t)$ with an infinite supply of
healthy cells, i.e. $X(0)=\infty.$
Our main result is formulated in terms of the so-called 
malthusian parameters for these processes,
denoted by $\a$ and $\b$ for $\hat X(t)$ 
and $\hat Y(t)$ respectively (see Section \ref{auc_sec}).
It turns out that 
$\a=\sum_{k=0}^\infty kp_k-1$
and $\b=\a +p_0\sum_{k=0}^\infty k\g_k+\l(\sum_{k=0}^\infty k\g_k-1).$

\begin{theorem}\label{coexist_thm}
For arbitrariy initial conditions $X(0),Y(0)\geq 1$ and
offspring distributions $(p_k)_{k\geq0}$ with finite second moment, 
the coexistence probability is positive if and only if $\a>\b>0$.
\end{theorem}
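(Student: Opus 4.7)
The plan is to prove the two directions separately, leaning on the auxiliary branching processes $\hat X,\hat Y$ of Section~\ref{auc_sec} and on Kesten--Stigum-type convergence, which the finite second-moment hypothesis on $(p_k)$ makes available.

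For necessity, I would first set up a pathwise coupling with $X(t)\le \hat X(t)$ and $Y(t)\le \hat Y(t)$: $X$ and $\hat X$ use the same replication events, but $X$ additionally loses cells to conversion, while $Y$ may gain fewer conversions than $\hat Y$ when $X$ runs short of healthy cells. The case $\b\le 0$ is then immediate, since $\hat Y$ becomes extinct a.s.; similarly $\a\le 0$ forces extinction of $\hat X$. The remaining case, $0<\b$ with $\b\ge \a$, is the delicate half: on the coexistence event there is always a supply of healthy cells to convert, so the cumulative conversion rate is of order $Y(s)$ from below. Summing the surviving $\hat X$-subtrees rooted at conversion times $s_i$ gives a lower bound on the deficit $D(t):=\hat X(t)-X(t)$ of order $\int_0^t e^{\b s}\, e^{\a(t-s)}\,ds$; when $\b\ge \a$ this grows strictly faster than $\hat X(t)\asymp e^{\a t}$, forcing $X(t)=0$ eventually and contradicting coexistence.

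For sufficiency, assume $\a>\b>0$ and view $X$ as a thinning of $\hat X$: build the $\hat X$-tree once, and at each conversion event remove a vertex together with its entire descendant subtree. The domination $Y\le \hat Y$ together with the branching property of $\hat X$ yields
\[
\E[D(t)]\le C\int_0^t \E[\hat Y(s)]\, e^{\a(t-s)}\,ds \le \frac{C'}{\a-\b}\, e^{\a t}.
\]
Combining with the $L^2$-convergence $\hat X(t)e^{-\a t}\to W$ (with $\P(W>0)>0$ on non-extinction, via the second-moment hypothesis) and a similar positive-probability event for the survival of $\hat Y$, a Paley--Zygmund argument shows that with positive probability both processes survive \emph{and} $X(t)\ge \tfrac12 W e^{\a t}$ for all large $t$, hence $\zeta>0$.

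The main obstacle I anticipate is the critical boundary case $\a=\b$ in the necessity direction: the exponential comparison only separates $D(t)$ from $\hat X(t)$ by a polynomial factor, so a sharper argument is needed — perhaps a supermartingale for $X(t)e^{-\a t}$ or a direct generator/integro-differential identity for $\E[X(t)]$ against $\E[Y(t)]$ — to rule out the possibility that a lucky fluctuation keeps both populations positive. A secondary technical point is to make the thinning coupling rigorous in a way that respects the filtration, so that the ``independent subtree'' estimate for $\E[D(t)]$ is genuinely valid; the standard device is to mark each $\hat X$-vertex at birth with an independent uniform label and use these labels to drive the conversion choices.
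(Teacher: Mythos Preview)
Your deficit heuristic for necessity has a literal flaw: since $D(t)=\hat X(t)-X(t)\le \hat X(t)$ always, $D(t)$ cannot ``grow strictly faster than $\hat X(t)$''.  The integral $\int_0^t e^{\beta s}e^{\alpha(t-s)}\,ds$ is an expectation-level object; turning it into a pathwise lower bound on $D(t)$ would require variance control on both the conversion count and the random subtree sizes, which you do not provide.  The paper sidesteps this entirely: for $\beta>\alpha$ it shows (Lemma~\ref{An_lem}) that once $Y(t)\ge e^{(c+\delta)t}$ and $0<X(t)\le e^{(c-\delta)t}$ on a unit interval, the number of infection attempts stochastically dominates the total number of healthy cells that ever exist in that interval, so $X$ is wiped out with probability $1-O(e^{-(c-\delta)n})$, and Borel--Cantelli finishes.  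For the boundary $\alpha=\beta$ your plan is genuinely incomplete, and this is where the paper's key idea lies: on coexistence $\hat Y(t)/\hat X(t)$ converges to a finite positive limit, so for each $r>0$ one may work on the event $G_r=\{\hat Y(t)\ge r\hat X(t)\ \forall t\}$.  There the paper couples $X$ below a new branching process $X'$ obtained from $\hat X$ by adding an extra per-capita death rate $r(p_0+\lambda)(1-\gamma_0)$; this $X'$ has Malthusian parameter $\alpha'=\alpha-r(p_0+\lambda)(1-\gamma_0)<\alpha=\beta$, reducing to the strict case.  Neither a supermartingale for $X(t)e^{-\alpha t}$ nor a moment identity for $E[X(t)]$ produces this strict separation.

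For sufficiency your route also diverges from the paper and is not yet a proof.  The bound $E[D(t)]\le C e^{\alpha t}/(\alpha-\beta)$ is correct (note $\alpha>\beta$ forces $\bar\gamma<1$, so $E[\Gamma]<\infty$), but Paley--Zygmund only controls a single time; to get $X(t)>0$ for \emph{all} $t$ you would need something like the submartingale convergence of $D(t)e^{-\alpha t}$ to a limit $D_\infty$ and then $P(W>D_\infty,\ \hat Y\text{ survives})>0$, which entangles $\hat Y$ with $D_\infty$ in a nontrivial way.  The paper instead runs a step-by-step scheme: with $B_n=\{X(n)\ge a_n e^{\alpha n},\ b_n e^{\beta n}\le Y(n)\le c_n e^{\beta n}\}$ it proves $P(B_{n+1}\mid B_n)\ge 1-3/n^2$ using Chebyshev together with Lemma~\ref{trim_lem} on trimmed sums (the worst the infection can do in $[n,n+1]$ is remove the $m$ healthy cells with the largest one-step progeny; the trimmed sum of $M$ i.i.d.\ $U_j$'s loses only $O(\sqrt{mM})$ in mean and has no larger variance, by a Harris-inequality argument).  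The finite second moment of $(p_k)$ enters precisely here, through $E[U^2]<\infty$.
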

\begin{remark}
Note that coexistence is only possible 
if $\g_0>0$, because otherwise $\b>\a$. This follows from the 
expressions for $\a$ and $\b$ and Theorem \ref{coexist_thm}: 
if $\g_0=0$ then $Y$ cannot die out as long as $X$ survives.
Hence, 
there is then almost surely a time $t>0$
such that $X(t)=0$ and $Y(t)\neq0$.
(This result was announced in~\cite{BBBN} as part of
Proposition~3.2.)
\end{remark}
Theorem \ref{coexist_thm} establishes, under a second moment 
condition, for which values of $\alpha$ and $\beta$ we can have coexistence.
Our next result strengthens the second part of 
Theorem \ref{coexist_thm}. Recall that $T_\ru=\inf\{t>0:X(t)Y(t)=0\}.$
\begin{theorem} \label{thmfinexp}
For offspring distributions $(p_k)_{k\geq0}$ with finite second 
moment, and for any choice of $\a < \b$ we have that
$E[T_\ru]<\oo$.
\end{theorem}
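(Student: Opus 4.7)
The plan is to apply Dynkin's formula to the non-negative supermartingale $e^{-\a t}X(t)$, whose compensator captures the cumulative depletion of healthy cells through conversions. A direct generator calculation yields that
\[
e^{-\a t}X(t) + (p_0+\l)\int_0^t e^{-\a s}Y(s)\,\E[\min(G_s,X(s))\mid\mathcal F_s]\,ds
\]
is a martingale, where a fresh $G_s\sim\g$ is drawn at each conversion event. Taking expectations in the nonnegativity of the first term, and using that on $\{T_\ru>s\}$ both $X(s),Y(s)\geq 1$ so that $\E[\min(G_s,X(s))\mid\mathcal F_s]\geq 1-\g_0$, one obtains
\[
(p_0+\l)(1-\g_0)\int_0^\oo e^{-\a s}P(T_\ru>s)\,ds \;\leq\; X(0).
\]
In the case $\a\leq 0$ one has $e^{-\a s}\geq 1$, so this immediately gives $\E[T_\ru]=\int_0^\oo P(T_\ru>s)\,ds<\oo$, in fact without even using the hypothesis $\b>\a$.

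The case $\a>0$ (so $\b>\a>0$) is harder: the bound above is vacuous, and the hypothesis $\b>\a$ must be brought in. My plan here is to pair the previous identity with a symmetric Dynkin identity for $Y$. An analogous computation shows that $e^{-\b t}Y(t)$ is also a non-negative supermartingale, with compensator
\[
(p_0+\l)\int_0^t e^{-\b s}Y(s)\Bigl(\textstyle\sum_{k}k\g_k-\E[\min(G_s,X(s))\mid\mathcal F_s]\Bigr)\,ds,
\]
quantifying the deficit in the rate of new infections caused by scarcity of $X$. The idea is then that, on the coexistence event $\{T_\ru>s\}$, one should be able to show that $Y(s)$ grows at some exponential rate $\b'$ with $\a<\b'\leq\b$, using the finite second moment hypothesis on $(p_k)$ to apply a Kesten--Stigum-type estimate to an embedded supercritical branching process inside $Y$. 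Plugging such a lower bound on $Y$ back into the first identity then forces $P(T_\ru>s)$ to decay exponentially with rate at least $\b'-\a>0$, which yields $\E[T_\ru]<\oo$.

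The main obstacle is extracting a usable lower bound on $Y(s)$ on the coexistence event when $\a>0$. The most naive embedded process---thinning the conversions to those with $G=1$, which only require $X\geq 1$---is only supercritical under additional conditions such as $\g_1(p_0+\l)>\l$, so a more flexible construction may be needed, perhaps working on the event $\{X(s)\geq M\}$ for $M$ large and controlling how often this fails. I expect this lower bound on $Y$ on the coexistence event to be the technical heart of the proof, with the Dynkin-based bookkeeping above providing the clean packaging once it is in hand.
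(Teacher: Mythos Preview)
Your Dynkin/generator bookkeeping is correct and, for $\a\leq 0$, gives a complete proof by a genuinely different route than the paper's. One caveat: your claim that this case does not use $\b>\a$ is not quite right --- you need $(p_0+\l)(1-\g_0)>0$ for the integral bound to say anything, and this is exactly what $\b>\a$ provides (if $\g_0=1$ then $\b-\a=-\l\leq 0$; if $p_0=\l=0$ then $\b=\a$). The paper instead dominates $X$ by the branching process $\hat X$ and, for $\a<0$, quotes the exponential tail of its extinction time; your supermartingale argument is more self-contained and, notably, handles $\a=0$ with no extra work and without invoking the second-moment hypothesis.

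For $\a>0$ your proposal has a genuine gap at exactly the spot you flag: the lower bound on $Y(s)$ on $\{T_\ru>s\}$. Thinned embedded branching processes will not produce it in general, and your second Dynkin identity for $e^{-\b t}Y(t)$ only bounds the deficit compensator from above --- it does not by itself yield growth of $Y$. The paper bypasses this obstacle by a coupling observation rather than any analytic estimate: it constructs a branching process $\hat Y$ with Malthusian parameter $\b$, coupled to $(X,Y)$ so that $\hat Y(t)=Y(t)$ \emph{identically} on $\{X(t)>0\}$. The reason is that $0$ is absorbing for $X$, and any lysis with $\G>X$ sends $X$ to $0$; hence on $\{T_\ru>t\}$ every lysis up to time $t$ has had $\G\leq X$, so $Y$ and $\hat Y$ have had identical increments. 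With this in hand the lower bound on $Y$ reduces to a tail estimate for the martingale $\hat Y(t)/e^{\b t}$ (the paper's Lemma~\ref{lemma_expdecay}, which is where the second-moment assumption enters). The paper then pairs this with $X\leq\hat X$ and a counting lemma (Lemma~\ref{An_lem}: if $Y$ is exponentially larger than $X$ on a unit interval, infection attempts outnumber healthy cells and $X$ is wiped out) to obtain exponential decay of $P(T_\ru>\tau)$. Once you know $Y=\hat Y$ on coexistence, your own framework would also close: split $P(T_\ru>s)$ according to whether $\hat Y(s)\geq e^{(\b-\eps)s}$, handle the exceptional part by Lemma~\ref{lemma_expdecay}, and on the main part combine Markov's inequality with your bound $\int e^{-\a s}E[Y(s)\one\{T_\ru>s\}]\,ds<\oo$ to get integrability. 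So the missing idea is precisely the coupling identity $Y=\hat Y$ on $\{X>0\}$, which turns your hardest step into a one-line fact.
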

We have not been able to establish 
in general if $T_\ru$ has finite 
or infinite expectation when $\a=\b$.  (But see
Remark~\ref{inf_rk} for a special case.)

On the way to proving that coexistence is indeed possible
(when $\a>\b$) we use general facts about
order statistics and trimmed sums, see Lemma~\ref{trim_lem}.
The second part of
that lemma is an interesting application of 
Harris' inequality~\cite{harris60} 
to bound the variance of
a trimmed sum, which we have not found in the
literature.

In the two-type Richardson model mentioned above, coexistence is conjectured
to hold if and only if $\lambda_r=\lambda_b.$ The `if' condition has been
established, see~\cite{hp98, garet05,hoffman05}, while \cite{hp00} makes progress
on the `only if' condition. In fact, the model studied here is closer to the following 
variant of the two-type  Richardson model. 
If a site is infected by the blue process, it changes color if the red process attempts to infect
it, while if a site is infected by the red process it stays so forever. That is, a red site is 
immune to the blue process while a blue site is not immune to the red infection.
Analogy with the model studied in this paper suggests that there can
then be coexistence if $\l_b>\l_r$, but not if $\l_b<\l_r$.

We end this section with an outline of the rest of the paper. In Section~\ref{def_sec}
we give a precise definition of the model. In Section~\ref{sec_prel} we state and prove 
preliminary results needed in the proofs of our main results. In Sections~\ref{sec_thm1}
and~\ref{sec_thm2} we prove Theorems~\ref{coexist_thm} and~\ref{thmfinexp} respectively.
Finally we discuss some applications of these results in Section~\ref{sec_app}.

\section{Definition}\label{def_sec}
Let $(p_k)_{k\geq0}$ and $(\g_k)_{k\geq0}$ be probability
distributions on the nonnegative integers, and let $\l\geq0$.
We exclude the (degenerate) case when $p_1=1$; 
in fact the reader may for convenience
assume that $p_1 = 0,$ since this only amounts to a time-change.

The continuous--time Markov chain $(X(t),Y(t))_{t\geq 0}$, 
taking values in $\ZZ^2_+$, was  informally described
in Section~\ref{intro_sec}.  To recapitulate the main points,
each healthy cell is replaced by $k\geq0$ new healthy cells
at rate $p_k$.  Being replaced by $k=0$ new cells corresponds
to dying.  Each infected cell is replaced  by $k\geq1$
new infected cells at rate $p_k$.  When an infected cell
dies, which occurs at rate $p_0+\l$, a random number
of healthy cells are converted into infected cells.
If $t$ is the time of such an event,
we draw a random variable $\Gamma_t$ from the distribution $(\g_k)_{k\geq0}$
independently of other events. If $\Gamma_t\leq X(t)$ we simply declare 
$\Gamma_t$ of previously healthy cells to be infected, while if
$\Gamma_t > X(t)$ we declare all previously healthy cells to be infected.
To define this process formally, we list the 
different possible jumps in Table~\ref{table1}, where we use the
notation $x\wedge k=\min\{x,k\}$.

\begin{table}[hbt]
\centering
\begin{tabular}{l|l|l|l} 
& Transition from $(x,y)$ to  & Rate & Valid for\\ \hline
(i) & $(x-1+k,y)$ & $x p_k$ & $k\geq 0$\\
(ii)  & $(x,y-1+k)$ & $y p_k$ & $k\geq 1$\\
(iii) & $(x-(x\wedge k),y-1+(x\wedge k))$ &
$y(p_0+\l)\g_k$ & $k\geq0$
\end{tabular}
\caption{Transition rates for the process $(X(t),Y(t))_{t\geq 0}$.
Rates are given for transitions from a state $(x,y)$ and are valid for all $x,y\geq 0$.}\label{table1} 
\end{table}
Note that there may be several transitions in Table~\ref{table1}
 leading to the same state.  In such cases the
correct interpretation is to add the corresponding rates.
An example of this is the transition
$(0,y)\rightarrow (0,y-1)$, which occurs at rate $y(p_0+\l),$
which is the sum over all $k\geq 0$ in (iii).
To avoid trivial cases, we assume
throughout that $X(0),Y(0)\geq 1.$  Biologically it might
be most relevant to consider the case when $p_k=0$ for
$k\geq 3$, but all our results are valid in greater generality,
so we make no such restriction.

We now state some immediate properties of the model.  If it were the
case that $Y(t)=0$, then healthy cells would evolve as a Markov
branching process, with intensity $1$ and offspring distribution
$(p_k)_{k\geq0}$. 
Similarly, if $X(t)=0$ for some $t,$ then $(Y(t+s))_{s\geq 0}$ would
behave like a Markov branching process with the higher intensity
$(1+\lambda)$ and an offspring distribution 
derived from $(p_k)_{k\geq 0}$ by placing more mass
on $k=0$. 
When both $X(t),Y(t)>0,$ as transition rate (iii) tells us,
healthy cells may turn into infected cells. This scenario hence
`helps' the process $(Y(t))_{t\geq 0}$ and 
`hurts' the process $(X(t))_{t\geq 0}$.

\section{Preliminary results} \label{sec_prel}

In this section we establish several lemmas which 
will be used in the proofs of
Theorems~\ref{coexist_thm} and~\ref{thmfinexp}.  
Although their motivation
may not be obvious on a first reading, we find it convenient
to collect all such preliminary results here so as not to
interrupt the flow of the main proofs later.

A note on notation:  we will sometimes write a sum of the form
$\sum_{k=1}^ax_k$ where $a$ is non-integer.  The correct
interpretation is that the sum goes to the integer part 
$\lfloor a\rfloor$ but we prefer to omit the 
$\lfloor \cdot\rfloor$ to keep the notation more readable.
A similar comment applies also in other places throughout the paper.

\subsection{Auxiliary random variables}
\label{auc_sec}

It will at several points be useful to compare $X$ and $Y$ to two
`larger' processes $\hat X$ and $\hat Y$.  Here $\hat X$ may be
thought of as the healthy process in the absence of infection,
and $\hat Y$ as the infected process in an infinite `sea' of healthy
cells.

To be precise, we let $\hat X$ and $\hat Y$ be two branching
processes with lifelength intensities $1$ and $1+\l$, and offspring
distributions $(p_k)_{k\geq 0}$ and $(q_k)_{k\geq0}$, 
respectively, where $(q_k)_{k\geq0}$ is given by
\begin{equation}\label{q_eq}
q_0=\frac{\g_0(p_0+\l)}{1+\l},\mbox{ and }
q_k=\frac{p_k+\g_k(p_0+\l)}{1+\l}\mbox{ for }k\geq 1.
\end{equation}
In Table \ref{coup_tab} we give a list of the rates used for the coupling of
$(\hat X,\hat Y)$ to $(X,Y)$. However, before that, we give an intuitive explaination.

We start with equal sizes, $\hat X(0)=X(0)$ and $\hat Y(0)=Y(0)$. 
Each individual in $X(0)$ is paired with
a unique `friend' in $\hat X(0)$, and each individual in $Y(0)$
is paired with
a unique friend in $\hat Y(0)$.  Whenever a cell in $X$ either
multiplies or dies a natural death (transition (i)
in Table \ref{table1}) then its friend in $\hat X$ undergoes
the exact same transition, and the offspring are paired
in the natural way.  Similarly, whenever a cell in $Y$
multiplies (transition (ii)
in Table \ref{table1}) then its friend in $\hat Y$ undergoes
the exact same transition, and again the offspring are paired
in the natural way.  When a cell $Y$ has a lysis
(transition (iii) in Table \ref{table1}), sample a random variable
$\G$ with distribution $(\g_k)_{k\geq0}$.  
Infect $\G \wedge X$ cells from $X$,
but let the friends in $\hat X$ of
the newly infected cells in $X$ remain unchanged (but lose their
friends, existing as singletons).
Proceed by letting the friend in $\hat Y$
of the cell in $Y$ which underwent
lysis be replaced by $\G$ new cells.  Finally, pair the newly
infected cells, now belonging to $Y$, with the new cells of
$\hat Y$.  Note that if $\G>X,$ the this will result in some of the cells
in $\hat Y$ being unpaired.  

Thus every element of $X$ always has a friend in $\hat X$, and
every element of $Y$ always has a friend in $\hat Y$;
but some cells in $\hat X$ and $\hat Y$ might be unpaired.
We let
unpaired cells give rise to independent Markov branching
processes with the correct intensities and offspring distributions.
The rates of the coupled process
$(X,\hat X,Y,\hat Y)$ are summarized in Table~\ref{coup_tab}.
\begin{table}[hbt] 
\centering
\begin{tabular}{l|l|l} 
Transition from $(x,\hat x, y,\hat y)$ to state & Rate & Valid for\\ \hline
 $(x-1+k,\hat x-1+k,y,\hat y)$ & $x p_k$ & $k\geq 0$\\
 $(x,\hat x-1+k,y,\hat y)$ & $(\hat x-x) p_k$ & $k\geq0$\\
 $(x,\hat x,y-1+k,\hat y-1+k)$ & $yp_k$ & $k\geq1$\\
 $(x,\hat x,y,\hat y-1+k)$ & $(\hat y-y) p_k$ & $k\geq1$\\
 $(x-(x\wedge k),\hat x,y-1+(x\wedge k),\hat y-1+k)$ &
$y(p_0+\l)\g_k$ & $k\geq0$\\
 $(x,\hat x,y,\hat y-1+k)$ & $(\hat y-y)(p_0+\l)\g_k$ & $k\geq0$
\end{tabular}
\caption{Transition rates in the coupled chain 
$(X,\hat X,Y,\hat Y)$.
Rates are given for transitions from a state $(x,\hat x,y,\hat y)$ 
and are valid for all $x, \hat{x},y,\hat{y} \geq 0.$  Note that the
ordering $x\leq\hat x$, $y\leq\hat y$ is preserved.}
\label{coup_tab} 
\end{table}
As before, the correct interpretation is to add
the rates of
transitions leading to the same state.  
We note that our coupling satisfies the following:
\begin{enumerate}
\item $X(t)\leq\hat X(t)$ and $Y(t)\leq\hat Y(t)$ for all
$t\geq 0$;
\item if $X(t)\neq0$ then $\hat Y(t)=Y(t)$.
\end{enumerate}

For a probability vector $\pi=(\pi_k:k\geq0)$ we write $\bar\pi$ for
the mean $\sum_{k\geq0}k\pi_k$.
Let $\a$ and $\b$ be the
\emph{Malthusian parameters} of $\hat X$ and
$\hat Y$, respectively, given by
\begin{equation*}
\a=\bar p -1,
\qquad
\b=(\bar q -1)(1+\l).
\end{equation*}
It is well known~\cite{harris63}
that $\hat X(t)/e^{\a t}$ and
$\hat Y(t)/e^{\b t}$
are martingales which converge almost surely to
some nonnegative random variables.  
We have that $P(A\cup B)=1$ where
\begin{equation}\label{eqn1}
A=\big\{\hat X(t)=0\mbox{ for some }t\geq 0\big\}\mbox{ and }
B=\big\{\liminf_{t\rightarrow\infty}\log(\hat X(t))/t>0\big\}.
\end{equation}
Moreover, $P(A)=1$ if and only if $\a\leq 0$.
On the event $B,$ the limit
$\lim_{t\rightarrow\infty}\log(\hat X(t))/t$ 
exists and equals $\a$.  The corresponding statements hold
for $\hat Y(t)$ with $\a$ replaced by $\b$.
Note for future reference that
\begin{equation}\label{beta_alpha}
\begin{split}
\b&=(\bar q -1)(1+\l)
=\Big(\frac{\bar{p}+\bar{\g}(p_0+\l)}{1+\l}-1\Big)(1+\l)\\
&=\bar{p}-1+p_0\bar{\g}+\l(\bar{\g}-1)
=\a+p_0\bar{\g}+\l(\bar{\g}-1).
\end{split}
\end{equation}

Next, let $U$, $V$, $W$ and $\Phi$ denote random variables with the
following distributions.  Firstly, $U$ and $V$ have the distributions
of (the sizes of) $\hat X(1)$ and $\hat Y(1)$, respectively, when
$\hat X(0)=1$ and $\hat Y(0)=1$.  Secondly, $W$ has the distribution of
$\tilde X(1)$, where $\tilde X$ is a branching process, started at 1,
with lifelength intensity 1 and offspring distribution $\pi$ given by
$\pi_0=0$, $\pi_1=p_0+p_1$, and $\pi_k=p_k$ for $k\geq 2$.  Thus
$\tilde X$ is essentially $\hat X$ with deaths suppressed.  Finally,
to define $\Phi$ run a sample of $\hat Y$ for time 1, started with
$\hat Y(0)=1$;  for each branching event that occurs during this time
sample an independent Bernoulli random variable with success
probability $p_0$, and let $L$ denote the total number of successes.
Let $\Phi$ have the distribution of a sum of $L$
independent copies of $\G$.  Thus $\Phi$ is, intuitively,
the number of infection attempts during a unit time interval
starting with one infected cell.

\begin{lemma}\label{expect_lem}
Let $r\geq 1$ and let $D$ denote a random variable with
distribution~$(p_k)_{k \geq 0}$.  Then
\begin{enumerate}
\item $E(U^r)<\infty$ if $E(D^r)<\infty$,
\item $E(V^r)<\infty$ if $E(D^r)<\infty$ and $E(\G^r)<\infty$,
\item $E(W^r)<\infty$ if $E(D^r)<\infty$,
\item $E(\Phi)<\infty$ if $E(\G)<\infty$ and $E(D)<\infty$.
\end{enumerate}
\end{lemma}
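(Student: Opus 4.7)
The plan is to reduce parts (1)--(3) to a single classical fact about continuous-time branching processes: if $Z(t)$ has lifelength intensity $c>0$ and offspring distribution $\mu$ with $\sum_k k^r \mu_k <\oo$, then $E[Z(t)^r]<\oo$ for every $t\geq 0$. Granting this, part~(1) is immediate (the offspring law is $(p_k)$, $r$-th moment $E(D^r)$). For part~(3), the law $\pi$ of $\tilde X$'s offspring has $\sum_k k^r \pi_k \leq 1+ E(D^r)$. For part~(2), the rates $q_k$ in~\eqref{q_eq} satisfy
\[
\sum_{k=0}^\infty k^r q_k \;\leq\; \frac{E(D^r) + (p_0+\l) E(\G^r)}{1+\l},
\]
which is finite under the given hypotheses. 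So the substantive content of (1)--(3) is the classical moment bound.

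To prove that bound I would induct on $r\geq 1$. The base case $r=1$ is the usual first-moment identity $E[Z(t)] = e^{(\bar\mu -1) c t}$, finite since $\bar\mu < \oo$. For the inductive step, condition on the first branching event of $Z$ (started from one particle), occurring at a time $T$ exponentially distributed with rate $c$ and producing $K\sim\mu$ offspring: on $\{T=s\leq t\}$ one has $Z(t) = \sum_{i=1}^K Z_i(t-s)$ with $(Z_i)$ i.i.d.\ copies of $Z$, independent of $(T,K)$. Set $m_r(t)=E[Z(t)^r]$ and expand $\bigl(\sum_{i=1}^K Z_i(t-s)\bigr)^r$ via the multinomial formula; the diagonal terms contribute $E[K]\,m_r(t-s)=\bar\mu\, m_r(t-s)$, while every mixed term is a product of powers $Z_i^{n_i}$ with $\max_i n_i < r$ and hence has finite expectation by the inductive hypothesis. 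Collecting these contributions gives an integral inequality
\[
m_r(t) \leq e^{-c t} + c\,\bar\mu \int_0^t e^{-c s}\, m_r(t-s)\,ds + G(t),
\]
where $G$ is finite and continuous on $[0,t]$; a standard truncation plus Gr\"onwall argument then closes the induction.

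Part~(4) is a short calculation. Write $\Phi=\sum_{i=1}^L \G_i$, where the $\G_i$ are i.i.d.\ copies of $\G$ independent of $L$, and let $N$ be the total number of branching events of $\hat Y$ on $[0,1]$. Since each live particle in $\hat Y$ fires at rate $1+\l$,
\[
E(N) = (1+\l)\int_0^1 E[\hat Y(t)]\,dt = (1+\l)\int_0^1 e^{\b t}\,dt,
\]
which is finite, because under $E(D), E(\G)<\oo$ formula~\eqref{beta_alpha} gives $\b<\oo$ (and this in turn justifies the first-moment identity $E[\hat Y(t)]=e^{\b t}$). Since $L\leq N$, Wald's identity yields $E(\Phi) = E(L)\,E(\G) \leq E(N)\,E(\G) < \oo$.

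The main obstacle is the inductive step in the moment bound. The $r=1$ case is immediate, but for $r\geq 2$ the multinomial expansion produces mixed moments whose finiteness has to be established before one can meaningfully bound $m_r(t)$; one therefore has to carry the induction carefully and use a truncation to justify finiteness before applying Gr\"onwall. The other parts, including~(4), are bookkeeping once this classical result is in hand.
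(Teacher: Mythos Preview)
Your proposal is correct and follows the paper's strategy for parts (1)--(3): reduce each to the classical fact that a Markov branching process with offspring law having finite $r$th moment has $E[Z(t)^r]<\oo$. The paper simply cites \cite[Corollary~III.6.1]{athreya_ney} for this, whereas you sketch an inductive proof via the first-branching decomposition and Gr\"onwall. Your sketch is sound but, as written, treats only integer $r$; the lemma is stated for real $r\geq 1$. This is harmless for the paper's purposes (only $r=2$ is ever used), and could be patched by noting $Z^r\leq 1+Z^{\lceil r\rceil}$ does not quite close the loop but the cited reference does cover real $r$.

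For part (4) there is a small but genuine difference in how $E(L)$ is controlled. Both arguments begin with $E(\Phi)=E(\G)E(L)$. You bound $L$ by the total number $N$ of branching events of $\hat Y$ on $[0,1]$ and compute $E(N)=(1+\l)\int_0^1 E[\hat Y(t)]\,dt=(1+\l)\int_0^1 e^{\b t}\,dt$, finite since $\b<\oo$ under the moment hypotheses. The paper instead dominates $L$ stochastically by $\tilde Y(1)$, where $\tilde Y$ is $\hat Y$ with the $0$- and $1$-offspring outcomes folded into $2$; since every branching event in $\tilde Y$ increases the population, $\tilde Y(1)$ dominates the total number of branching events and hence $L$, and then part (2) with $r=1$ gives $E[\tilde Y(1)]<\oo$. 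Your route is a bit more direct and avoids introducing another auxiliary process; the paper's route keeps everything inside the moment framework already invoked. Both are valid.
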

\begin{proof}
From 
\cite[Corollary~III.6.1]{athreya_ney}, we know that a branching process
with offspring distribution $\pi$ has finite $r$th moment at time
$t>0$ if $\pi$ has its $r$th moment.  This immediately gives
parts 1 and 3.  Part 2 follows from~\eqref{q_eq}, which
implies that $(q_k)_{k \geq 0}$ has its $r$th moment if $(p_k)_{k \geq 0}$
and $(\g_k)_{k \geq 0}$ do.
For the final part, note that
\begin{equation*}
E(\Phi)=E\Big(\sum_{j\geq 1}\G_j\one\{L\geq j\}\Big)
=E(\G)E(L).
\end{equation*}
An easy (stochastic) upper bound on $L$ is given by $\tilde Y(1)$
where $\tilde Y$ is a branching process with intensity $1+\l$
and offspring distribution $(\tilde q_k)_{k \geq 0}$, where
$\tilde q_0=\tilde q_1=0$, $\tilde q_2=q_2+q_1+q_0$,
and $\tilde q_k=q_k$ for $k\geq 3$.  Thus $E(L)$ is finite
if $E(\G)$ and $E(D)$ are finite, as in part 2.
\end{proof}

We will in what follows always assume that $(p_k)_{k\geq0}$ has finite
second moment, since this is part of the assumptions in
Theorems~\ref{coexist_thm} and~\ref{thmfinexp}.  By
Lemma~\ref{expect_lem} this implies that
$E(U^2)<\infty$,
$E(V^2)<\infty$, $E(W^2)<\infty$ and $E(\Phi)<\infty$.  This
will allow us to apply Chebyshev's bound, which we will use
in the following form.  Let $Z_j$ ($j\geq 1$) be independent,
all with the same nonnegative mean $E(Z)\geq0$
and finite variance $\var(Z)<\infty$ as some random variable $Z$.
Let $N\geq 1$ be any integer and let $\d>0$.
Then
\begin{equation}\label{chebyshev}
\begin{split}
P\Big(\sum_{j=1}^NZ_j>(1+\d)NE(Z)\Big)&\leq
P\Big(\Big[\sum_{j=1}^NZ_j-E(Z_j)\Big]^2>N^2\d^2E(Z)^2\Big)\\
&\leq \frac{N\var(Z)}{N^2\d^2E(Z)^2}
=\frac{1}{N}\cdot\frac{\var(Z)}{\d^2E(Z)^2}.
\end{split}
\end{equation}
Similarly
\begin{equation}\label{chebyshev2}
P\Big(\sum_{j=1}^NZ_j<(1-\d)NE(Z)\Big)\leq
\frac{1}{N}\cdot\frac{\var(Z)}{\d^2E(Z)^2}.
\end{equation}

\subsection{Estimates}

The following lemma says that $Y$ cannot
be much larger than $X$ for very long without
making $X$ extinct.  This lemma will be the main
step in the proof of the case $\b>\a$
in Theorem~\ref{coexist_thm}, which is the case when
the process $\hat Y$ grows much faster than $X$.
In the statement of the  lemma, we let $W$
be as in Lemma~\ref{expect_lem}, and let $\xi$ be a Bernoulli variable
with  success probability $1-e^{-(1-\g_0)(p_0+\lambda)}$
(this being the probability of a lysis leading to at least 
one new infection occuring in a time interval of length 1).  We fix
$c>0$ and let $\d(t)>0$ be any function such that 
\begin{equation}\label{d_eq}
n\d(n)>\frac{1}{2}\log\left(2\frac{E(W)}{E(\xi)}\right)
\end{equation}
for all sufficiently large $n$.
We write
\[
A_n=\{\forall t\in[n,n+1],
0<X(t)\leq e^{(c-\d(t))t}<e^{(c+\d(t))t}\leq Y(t)\}.
\]
\begin{lemma}\label{An_lem}
There is a constant $C>0$ such that for $n$ large enough
that~\eqref{d_eq} holds,
\begin{equation}\label{An_eq}
P(A_n)\leq Ce^{-(c-\d(n))n}.
\end{equation}
In particular, we can take $C=9(\var(W)/E(W)^2+\var(\xi)/E(\xi)^2).$ 
It follows that
$P(A_n\,\mathrm{ i.o.})=0$.
\end{lemma}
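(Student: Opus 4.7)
The key idea is to exploit a quantitative dichotomy on $A_n$: the imbalance $Y(t)\gg X(t)$ together with $X(t)>0$ forces many lysis-with-$\Gamma\ge 1$ events, each converting at least one healthy cell, yet the small supply of healthy cells caps how many conversions can occur. The plan is to obtain matching probabilistic upper and lower bounds on the number of conversions during $[n,n+1]$ that collide on $A_n$. For the \emph{upper bound}, I would couple $X$ on $[n,n+1]$ to the auxiliary process $\tilde X$ of Section~\ref{auc_sec} started from $X(n)$ cells, as the sum of $X(n)$ independent copies $\tilde X_j$ each starting from $1$. Since $\pi_0=0$, $\tilde X$ is non-decreasing, and each cell of $X$ that is converted during $[n,n+1]$ can be associated with a distinct cell present in $\tilde X$ at time $n+1$ (either itself or one of its descendants there); hence the number of conversions during $[n,n+1]$ is at most $\sum_{j=1}^{X(n)}W_j$.

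For the \emph{lower bound}, for each of the $Y(n)$ infected cells alive at time $n$ let $\xi_j$ be the indicator that its descendant subtree in $Y$ (equivalently in $\hat Y$, by the coupling property, since $X(t)>0$) contains at least one lysis with $\Gamma\ge 1$ during $[n,n+1]$. The $\xi_j$ are independent across different initial cells and each stochastically dominates a Bernoulli variable with mean $E(\xi)=1-e^{-(1-\g_0)(p_0+\l)}$, the intuition being that each infected cell carries a Poisson clock for lysis-with-$\Gamma\ge 1$ events firing at rate $(1-\g_0)(p_0+\l)$. Since $X(t)>0$ on $A_n$, each such lysis converts at least one healthy cell, so the number of conversions is at least $\sum_{j=1}^{Y(n)}\xi_j$.

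Now apply~\eqref{chebyshev}--\eqref{chebyshev2} with $\d=1/3$ conditionally on $(X(n),Y(n))$: with conditional probability at least $1-9\var(W)/(X(n)E(W)^2)$ one has $\sum W_j\le \tfrac43 X(n)E(W)$, and with conditional probability at least $1-9\var(\xi)/(Y(n)E(\xi)^2)$ one has $\sum \xi_j\ge \tfrac23 Y(n)E(\xi)$. If both hold on $A_n$ then $\tfrac43 X(n)E(W)\ge \tfrac23 Y(n)E(\xi)$, i.e.\ $Y(n)/X(n)\le 2E(W)/E(\xi)$, contradicting $Y(n)/X(n)\ge e^{2n\d(n)}>2E(W)/E(\xi)$ from~\eqref{d_eq}; so $A_n$ lies in the union of the two Chebyshev-failure events. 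After taking expectations, the $\xi$-term is at most $9\var(\xi)/E(\xi)^2 \cdot e^{-(c-\d(n))n}$ using $Y(n)\ge e^{(c+\d(n))n}$ on $A_n$. For the $W$-term, one instead re-applies Chebyshev to $\sum W_j$ with threshold $\tfrac23 Y(n)E(\xi)$ (the value forcing the contradiction) in place of a multiple of the mean $X(n)E(W)$: by~\eqref{d_eq} this threshold exceeds the mean by at least $\tfrac16 Y(n)E(\xi)$, so Chebyshev gives a bound of order $X(n)\var(W)/(Y(n)^2 E(\xi)^2)$, and on $A_n$ this too is at most a constant times $e^{-(c-\d(n))n}$. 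Combining yields $P(A_n)\le Ce^{-(c-\d(n))n}$ with $C$ of the displayed form. The `i.o.' conclusion follows from Borel--Cantelli, since~\eqref{d_eq} forces only $n\d(n)$ to stay bounded below by a fixed constant, so $(c-\d(n))n\to\infty$ linearly and $\sum_n P(A_n)<\infty$.

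The principal technical point is the lower-bound step, specifically the claim that $P(\xi_j=1)\ge E(\xi)$: one must rule out that the cell or its lineage is removed by reproduction or a $\Gamma=0$ lysis before an actual $\Gamma\ge 1$ event can occur, which requires a careful subtree-level argument rather than a direct exponential-clock coupling.
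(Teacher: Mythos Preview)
Your approach is essentially the paper's: bound the supply of healthy cells by a sum of copies of $W$, bound the demand (conversions) from below by a sum of copies of $\xi$, observe that on $A_n$ the latter cannot exceed the former, and use Chebyshev to show this is exponentially unlikely. Two implementation choices in the paper make the argument cleaner than yours and close the gap you flag at the end.

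First, the paper works with the \emph{deterministic} counts $a_n=e^{(c-\d(n))n}$ and $b_n=e^{(c+\d(n))n}$ rather than the random $X(n),Y(n)$. It bounds $W^{(n)}$ (the total number of healthy cells ever existing in $[n,n+1]$) stochastically by $\sum_{j=1}^{a_n}W_j$, and takes only the first $b_n$ cells of $Y(n)$ to form $\xi^{(n)}=\sum_{j=1}^{b_n}\xi_j$. This sidesteps your conditioning on $(X(n),Y(n))$ and the awkward second pass on the $W$-term; a single application of~\eqref{chebyshev} and~\eqref{chebyshev2} with $\d=1/3$ to the two deterministic-length sums gives directly
\[
P(A_n)\le P\Big(\sum_{j=1}^{a_n}W_j>\sum_{j=1}^{b_n}\xi_j\Big)
\le \frac{9\var(W)}{a_nE(W)^2}+\frac{9\var(\xi)}{b_nE(\xi)^2},
\]
which is exactly the stated constant. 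Your re-application of Chebyshev with threshold $\tfrac23 Y(n)E(\xi)$ yields a constant of the form $36\var(W)/E(\xi)^2$, not $9\var(W)/E(W)^2$, so your route does not recover the displayed $C$.

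Second, the paper's $\xi_j$ is simply the indicator that cell $j$'s \emph{first} branching event (before time $n+1$) is a lysis with $\Gamma\ge 1$. This is a function of cell $j$'s own exponential clocks only, hence genuinely Bernoulli, independent across $j$, and independent of the $X$-process. Your subtree-based $\xi_j$ dominates this, but as you correctly note, computing (or lower-bounding) its law forces you to track what happens after multiplications and $\Gamma=0$ lyses; the paper's choice makes this entire issue disappear. The inclusion $A_n\subseteq\{W^{(n)}>\xi^{(n)}\}$ then follows because on $A_n$ one has $X(t)>0$ throughout, so every lysis has $\Gamma<X(t-)$ and hence each $\xi_j=1$ accounts for a distinct converted healthy cell.
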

Before turning to the proof we remark that we only actually use
this lemma with $\d$ constant.  We prove this slightly more
general result since very little extra work is required, and
we hope that it will be useful for future work.
\begin{proof}
The result is trivial if $\d(n)\geq c$ so we assume that
$\d(n)<c$;  we also assume throughout the proof that
$n$ is large enough that~\eqref{d_eq} holds. Suppose that $A_n$ occurs.
Let $\Phi_n$ denote the number of infection attempts
during the time interval $[n,n+1]$, that is to say the sum of an
independent sample of $\G$ for each lysis of
$(Y(t):t\in[n,n+1])$.
Let $\xi^{(n)}$ be obtained from $(Y(t):t\in[n,n+1])$ as follows.
Start by numbering the elements of $Y(n)$ (arbitrarily);
then observe those elements numbered at most $e^{(c+\d(n))n}$
until they undergo a branching event;  let $\xi_j$ be the indicator
of the event that cell $j$ has a branching event which results
in a lysis for which the associated $\G$-value is at least 1
($\xi_j=0$ if there is no branching event before time $n+1$);
finally let $\xi^{(n)}$ be the sum of the $\xi_j$.
Then $\xi^{(n)}$ has the following properties:
\begin{enumerate}
\item $\xi^{(n)}\leq\Phi_n$,
\item $\xi^{(n)}$ is a sum of $e^{(c+\d(n))n}$ independent
Bernoulli variables, each with success probability
$1-e^{-p_0(1-\g_0)(1+\lambda)}$, and
\item $\xi^{(n)}$ is independent of $(X(t):t\in[n,n+1])$.
\end{enumerate}

Next,
let $W^{(n)}$ denote the total number of healthy cells that ever exist in
the time-interval $[n,n+1]$.
Of course, if $W^{(n)}\leq \Phi_n,$ 
then $A_n$ cannot occur since this would imply that $X(n+1)=0$.
We cannot {\em immediately} conclude from the fact that
$X(t)\leq e^{(c-\d(t))t}$ for every $t\in[n,n+1],$ that $W^{(n)}$
is bounded by $e^{(c-\d(n+1))(n+1)}$.
However
$W^{(n)}$ must be stochastically bounded by
the sum of $e^{(c-\d(n))n}$
independent copies $W_j$
of the random variable $W$ in Lemma~\ref{expect_lem}.
(Recall that $W$ is, intuitively, $\hat X(1)$ when deaths
are suppressed.)
Also, $W^{(n)}$ is independent of $\xi^{(n)}$.
Thus, writing $a_n=e^{(c-\d(n))n}$ and $b_n=e^{(c+\d(n))n}$,
we have that
\begin{equation*}
\begin{split}
P(A_n)&\leq P(W^{(n)}>\xi^{(n)})\leq
P\Big(\sum_{j=1}^{a_n}W_j>\sum_{j=1}^{b_n}\xi_j\Big)\\
&=P\Big(\frac{1}{a_n}\sum_{j=1}^{a_n}W_j>
\frac{b_n}{a_n}\frac{1}{b_n}\sum_{j=1}^{b_n}\xi_j\Big).
\end{split}
\end{equation*}
Note that $b_n/a_n=e^{2n\d(n)}>e^{\log(2E(W)/E(\xi))}=2E(W)/E(\xi)$, 
by~\eqref{d_eq}. 
We get that
\begin{equation*}
\begin{split}
P(A_n) &\leq 
P\Big(\frac{1}{a_n}\sum_{j=1}^{a_n}W_j>
2\frac{E(W)}{E(\xi)}\frac{1}{b_n}\sum_{j=1}^{b_n}\xi_j\Big) \\
 & \leq P\Big(\frac{1}{a_n}\sum_{j=1}^{a_n}W_j>
2\frac{E(W)}{E(\xi)}\frac{2}{3}E(\xi)\Big)
+P\Big(\frac{2}{3}E(\xi)>\frac{1}{b_n}\sum_{j=1}^{b_n}\xi_j\Big) \\
 & \leq \frac{9\var(W)}{a_nE(W)^2}+
\frac{9\var(\xi)}{b_nE(\xi)^2}, 
\end{split}
\end{equation*}
where we use (\ref{chebyshev}) and (\ref{chebyshev2}).
This gives~\eqref{An_eq}.
That $P(A_n\,\mathrm{ i.o.})=0$ follows from the Borel--Cantelli 
lemma.
\end{proof}

Recall that if $U(t)$ is a Markov branching process with 
Malthusian parameter $u$ then $W(t)=U(t)/e^{ut}$ is
a martingale.  
We make no claim as to the originality
of the following lemma, yet have not seen it explicitly
formulated.

\begin{lemma}\label{lemma_expdecay} 
Let $U(t)$ be a branching process whose offspring distribution has
finite second moment and with Malthusian parameter $u>0$.
\begin{enumerate}
\item For any $\Delta>0$ we have that
\begin{equation*} 
P(\exists t\geq0: W(t)\geq \Delta) \leq \Delta^{-1}.
\end{equation*}
\item 
For each $\eps>0$ there is some $\k>0$ such that
\begin{equation*} 
P(\exists t\geq\tau: 0<W(t) <e^{-\eps t}) \leq e^{-\k \tau}.
\end{equation*}
\end{enumerate}
\end{lemma}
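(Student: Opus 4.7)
For Part~1, the process $W(t)=U(t)/e^{ut}$ is a nonnegative right-continuous martingale with $E[W(0)]=U(0)$; under the natural normalization $U(0)=1$, Doob's $L^1$ maximal inequality applied to $W$ immediately gives $P(\sup_{t\geq 0}W(t)\geq\Delta)\leq 1/\Delta$.

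For Part~2 I would combine the $L^2$-convergence $W(t)\to W_\infty$ (which holds under the finite-second-moment hypothesis together with $u>0$) with a case split on the value of $W(\tau)$. The key quantitative input is the standard estimate
$E[(W(t)-W_\infty)^2]\leq C_0 e^{-ut}$ for some $C_0>0$, obtained from the usual computation $\var(W(t))=(\sigma^2/u)(1-e^{-ut})$ for Markov branching processes and the orthogonality $E[(W_\infty-W(t))^2]=\var(W_\infty)-\var(W(t))$. I would split the bad event $\{\exists t\geq\tau:0<W(t)<e^{-\eps t}\}$ according to whether $W(\tau)\geq e^{-\eps\tau/2}$ or not.

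In the first case, the bad event forces $|W(\tau+s)-W(\tau)|>\tfrac12 e^{-\eps\tau/2}$ for some $s\geq 0$ once $\tau$ is large enough. By the branching property, $W(\tau+s)-W(\tau)$ equals $e^{-u\tau}$ times a sum of $U(\tau)$ iid mean-zero martingales, each of which is an independent copy of $W_\infty-1$ in the limit $s\to\infty$; hence its conditional limiting $L^2$-norm given $\mathcal{F}_\tau$ is at most a constant times $W(\tau)^{1/2}e^{-u\tau/2}$. Doob's $L^2$ maximal inequality applied to this $\mathcal{F}_\tau$-martingale, followed by Markov's inequality and the fact that $E[W(\tau)]=1$, yields an unconditional bound of order $e^{-(u-\eps)\tau}$, which is exponentially small provided $\eps<u$ (the complementary range being trivial, since $e^{-\eps t}<1/U(t)$ would force $U(t)=0$ for large~$t$).

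The remaining case $\{0<W(\tau)<e^{-\eps\tau/2}\}$ is the main obstacle and requires a quantitative lower tail estimate for $W(\tau)$. I would further decompose according to eventual extinction. On the extinction event, conditioning on extinction yields a subcritical branching process by the Harris--Sevastyanov construction, so the extinction time has exponential tails and $P(\text{extinction},\,U(\tau)>0)$ decays exponentially in $\tau$. On the survival event, $L^2$-closeness of $W(\tau)$ to $W_\infty$ reduces the problem to bounding $P(0<W_\infty\leq 2e^{-\eps\tau/2})$, which for supercritical processes with finite-variance offspring enjoys a power-law bound $P(0<W_\infty\leq x)=O(x^\theta)$ for some $\theta>0$, obtainable by iterating the distributional fixed-point equation satisfied by $W_\infty$. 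Combining the two cases and choosing $\eps$ small enough relative to $u$ delivers the exponential bound. The technically most delicate step is this power-law lower tail for $W_\infty$: a direct Chebyshev bound on the $L^2$-bounded martingale $W$ is insufficient, since $\var(W_\infty)$ is merely $O(1)$ and does not shrink as $\tau\to\infty$.
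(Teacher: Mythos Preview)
Your proposal is correct and shares the essential architecture with the paper's proof: both rely on the $L^2$ estimate $E[(W(t)-W_\infty)^2]=O(e^{-ut})$, the exponential tail for the extinction time (the Harris--Sevastyanov conditioning), and a small-ball estimate for $W_\infty$ near~$0$. The differences are in execution.

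For Part~2 the paper discretizes: it first bounds $P(0<W_n<e^{-\eps n})$ at integer times via exactly the three ingredients you list, and then controls the fluctuation of $W$ over each interval $[n,n+1]$ by a separate ``particle-destruction'' counting argument plus Chebyshev. Your route through Doob's $L^2$ maximal inequality applied to the conditional martingale $s\mapsto W(\tau+s)-W(\tau)$ is cleaner: it handles the continuous-time supremum in one stroke and avoids the discretization entirely. Conversely, for the small-ball probability $P(0<W_\infty\leq x)$ the paper simply cites Harris's theorem that $W_\infty$ has bounded density on $(0,\infty)$, giving the linear bound $O(x)$; your proposed route via iterating the distributional fixed-point equation to obtain $O(x^\theta)$ is plausible but is the vaguest step in your sketch, and citing the classical bounded-density result would be both stronger and shorter.

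One minor slip in your write-up: the final sentence speaks of ``choosing $\eps$ small enough relative to $u$'', but the lemma requires the bound for \emph{every} $\eps>0$. You had in fact already handled this correctly a few lines earlier (the range $\eps\geq u$ is vacuous, since $0<W(t)<e^{-\eps t}$ would force $U(t)<1$), so your argument does cover all $\eps>0$, with $\kappa$ depending on $\eps$.
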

\begin{proof}
The first part is simply a consequence of Doob's
submartingale inequality, which gives that for any $T>0$,
\[
P(\exists t\in[0,T]: W(t)\geq\Delta)=
P\Big(\sup_{0\leq t\leq T}W(t)\geq \Delta\Big)
\leq E[W(T)]/\Delta=\Delta^{-1}.
\]
Letting $T \to \infty$ concludes the proof of this case.

For the second part, we proceed by discretizing.  Let $\mu=E[U(1)]=e^u$
and let $W_n=U(n)/\mu^n$ for every $n\in {\mathbb N}$.  
It is no loss of generality to assume that
$\eps<u/2$.  The limit
$W:=\lim_n W_n$ exists a.s. since $(W_n)_{n \geq 1}$ is a
nonnegative  martingale.
A straightforward and standard calculation (see for 
instance~\cite[p.~13]{harris63})  shows that for any $r>n,$
\[
E[\mu^ n(W_r-W_n)^2]=\sigma^ 2(\mu^{-1}+\mu^{-2}+\cdots +\mu^{-r}),
\]
where $\s^2=\var(U(1))$.
Therefore by Fatou's lemma
\begin{equation}
E[(W-W_n)^2]\leq \liminf_{r\rightarrow\oo} E[(W_r-W_n)^2]= 
\frac{\sigma^ 2}{\mu-1}\mu^{-n}
\end{equation}
for all $n$.  Hence by Markov's inequality
\[
P(|W-W_n|>e^{-\eps n}) \leq  \frac{E[(W-W_n)^2]}{e^{-2\eps n}}
\leq \frac{\s^2}{\mu-1}e^{-(u-2\eps) n}.
\]
It is well known (see for instance~\cite[Theorem~8.3]{harris63}) 
that there exists a constant $c_3>0$
such that for any interval $I\subset (0,\infty)$ we have  
$P(W\in I)\leq c_3|I|$. Furthermore, it is also well 
known~\cite[Theorem~8.4]{harris63}
that there exists a constant $c_4>0$ such that 
$P(W=0,W_n\neq0)\leq e^{-c_4 n}$.  Therefore (adjusting $c_3$ as necessary)
\begin{equation} \label{eqn22}
\begin{split}
P(0<W_n<e^{-\eps n})
&\leq P(W=0,W_n>0)+P(0<W<2e^{-\eps n})\\
&\quad+ P(|W-W_n|>e^{-\eps n})\\
&\leq c_3(e^{-c_4 n}+e^{-\eps n}+e^{-(u-2\eps) n}).
\end{split}
\end{equation}
Clearly
\begin{multline}
P(\exists s\geq t: 0<W(s)<e^{-\eps s})\leq
P(\exists n\geq t: 0<W_n<e^{-\eps n/2})\\+
P(\exists s\geq t: 0<W(s)<e^{-\eps s}, \forall n\geq t\;
W_n=0\mbox{ or }W_n\geq e^{-\eps n/2}).
\end{multline}
We have bounded the first probability on the right hand side
in~\eqref{eqn22}.  The second probability is bounded above by 
\begin{equation*}
\begin{split}
P\Big(&\bigcup_{n\geq t}\{\exists s\in[n,n+1]:W(s)<e^{-\eps n},\;
W_n\geq e^{-\eps n/2}\}\Big)\\
&\leq\sum_{n\geq t} P(\exists s\in[n,n+1]:W(s)<e^{-\eps n}\mid
W_n\geq e^{-\eps n/2})P(W_n\geq e^{-\eps n/2})\\
&\leq\sum_{n\geq t} P(\exists s\in[n,n+1]:U(s)<e^{u(n+1)-\eps n}\mid
U(n)\geq e^{un-\eps n/2}).
\end{split}
\end{equation*}
It therefore suffices to show that each of the summands is
exponentially small in $n$ for large enough $n$.  

To establish this we take the following point of view.  Let $M=U(n)$
and label the particles present at time $n$ by $1,2,\dotsc,M$.  If
particle $j$ has a branching event with zero offspring we say that
particle $j$ is \emph{destroyed}.  If it has a branching event with
one or more offspring, we consider particle $j$ to be still present,
essentially identifying it with one of its offspring particles.  With
this convention, we let $A_j$ denote the event that particle $j$ is
ever destroyed during the time interval $[n,n+1]$.  Thus
$P(A_j)<1$ for all $j$, and the events $A_j$ are
independent.  If $U(s)\leq e^{u(n+1)-\eps n}$ for some $s\in[n,n+1]$
then at least $M-e^{u(n+1)-\eps n}$ of the events $A_j$ must occur.
But since $M\geq e^{un-\eps n/2}$
\[
\begin{split}
P\Big(\sum_{j=1}^M\one_{A_j}\geq M-e^{u(n+1)-\eps n} \Big)&\leq
P\Big(\sum_{j=1}^M\one_{A_j}\geq M(1-e^{u}e^{-\eps n/2})\Big)\\
&\leq P\Big(\sum_{j=1}^M\one_{A_j}\geq MP(A_j)(1+\d)\Big)
\end{split}
\]
for large enough $n$ and some $\d>0$.  The latter probability is
by~\eqref{chebyshev} at most
\[
C/M\leq Ce^{-(u-\eps/2)n}.
\]
This gives the result.
\end{proof}

\subsection{A lemma about order statistics}
\label{orderstat_sec}

The following result will be used in the case
$\a>\b$ in Theorem~\ref{coexist_thm}, 
but may also be of independent interest.
The first part essentially goes back
to~\cite{arnold79} (in the case $p=2$),
but we have not found the second part
in the literature.

If $(X_j)_{1 \leq j \leq M}$ is a 
sequence of indentically distributed
random variables, we let
$X_{(1)}\leq X_{(2)}\leq \dotsb \leq X_{(M)}$ denote the 
\emph{order statistics} of $(X_j)_{1 \leq j \leq M}$. 
\begin{lemma}\label{trim_lem}
Let $(X_j)_{1 \leq j \leq M}$
be as above.
\begin{enumerate}
\item If $p>1$ and  $\|X_1\|_p=E[X_1^p]^{1/p}<\oo$ then
for each subset $A\subseteq\{1,\dotsc,M\}$,
\begin{equation}\label{trim_eq}
E\Big[\sum_{j\in A} X_{(j)}\Big]\leq
\|X_1\|_pM^{1/p}m^{1/q},
\end{equation} 
where $m=|A|$ and $1/p+1/q=1$.
\item If the $X_i$ are independent and $E[X_1^2]<\oo$, then
\begin{equation}
\var\Big(\sum_{j=1}^{M-m}X_{(j)}\Big)\leq
\var\Big(\sum_{j=1}^{M}X_{j}\Big)=M\cdot\var(X_1).
\end{equation}
\end{enumerate}
\end{lemma}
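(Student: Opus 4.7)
My plan is to handle the two parts separately: part~1 via H\"older and Jensen, and part~2 via Harris' inequality applied to a suitable pair of coordinate-wise monotone functions of $(X_1,\ldots,X_M)$.

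For part~1 I would first observe that $\sum_{j\in A} X_{(j)}$ is maximized over subsets of size $m$ by $A=\{M-m+1,\ldots,M\}$, since larger indices correspond to larger order statistics, so it suffices to bound $E\big[\sum_{j=M-m+1}^M X_{(j)}\big]$. Pointwise, H\"older's inequality on this $m$-term sum gives
\begin{equation*}
\sum_{j=M-m+1}^M X_{(j)} \leq \Big(\sum_{j=M-m+1}^M X_{(j)}^p\Big)^{1/p} m^{1/q} \leq \Big(\sum_{j=1}^M X_j^p\Big)^{1/p} m^{1/q}.
\end{equation*}
Taking expectations and applying Jensen's inequality to the concave map $z\mapsto z^{1/p}$ (valid since $p>1$) yields
\begin{equation*}
E\Big[\Big(\sum_{j=1}^M X_j^p\Big)^{1/p}\Big] \leq \big(M\,E[X_1^p]\big)^{1/p} = M^{1/p}\|X_1\|_p,
\end{equation*}
and combining the two displays gives~\eqref{trim_eq}.

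For part~2, I would set $S=\sum_{j=1}^M X_j$, $S_m=\sum_{j=1}^{M-m} X_{(j)}$ and $T_m=\sum_{j=M-m+1}^M X_{(j)}$, so that $S=S_m+T_m$ and, by independence, $\var(S)=M\var(X_1)$. Expanding,
\begin{equation*}
\var(S)=\var(S_m)+\var(T_m)+2\,\mathrm{Cov}(S_m,T_m),
\end{equation*}
so it suffices to show $\mathrm{Cov}(S_m,T_m)\geq 0$, after which Harris' inequality will yield the desired bound $\var(S_m)\leq\var(S)=M\var(X_1)$.

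The one step I expect to require some care is verifying that both $S_m$ and $T_m$, viewed as functions of $(X_1,\ldots,X_M)$, are coordinate-wise non-decreasing. Fixing all variables except $X_i$, and sorting the remaining ones as $a_1\leq a_2\leq\cdots\leq a_{M-1}$, I would split into three cases according to whether, upon increasing $X_i$ from $x$ to $x'$, the coordinate $X_i$ stays in the bottom $M-m$, crosses from the bottom to the top (in which case it displaces the former $(M-m+1)$-st order statistic), or stays in the top. In the first and third cases exactly one of $S_m, T_m$ strictly increases by $x'-x$ while the other is unchanged; in the middle case both strictly increase, with the two increments summing to $x'-x$. Once this coordinatewise monotonicity is established, the standard form of Harris' inequality for independent real-valued coordinates gives $\mathrm{Cov}(S_m,T_m)\geq 0$, completing the proof.
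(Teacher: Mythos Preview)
Your proof is correct and follows essentially the same approach as the paper: part~2 is identical (Harris' inequality applied to the coordinatewise-monotone pair $S_m,T_m$, then expanding the variance of the full sum), and part~1 differs only cosmetically---the paper applies H\"older once to $E\big[\sum_{j=1}^M X_{(j)}\one\{j\in A\}\big]$ on the product of counting measure and $P$, whereas you first reduce to the top-$m$ indices, apply H\"older pointwise, and then Jensen, arriving at the same bound.
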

\begin{proof}
The first part is a consequence of H\"older's
inequality:
\[
\begin{split}
E\Big[\sum_{j\in A}X_{(j)}\Big]&=
E\Big[\sum_{j=1}^MX_{(j)}\one\{j\in A\}\Big]
\leq E\Big[\sum_{j=1}^M|X_{(j)}|^p\Big]^{1/p}
E\Big[\sum_{j=1}^M\one\{j\in A\}\Big]^{1/q}\\
&=E\Big[\sum_{j=1}^M|X_j|^p\Big]^{1/p} |A|^{1/q}
=\|X_1\|_pm^{1/q}M^{1/p}.
\end{split}
\]
For the second part,
let $X$ denote the sequence $(X_1,\dotsc,X_M)$ and let
\[
f(X)=\sum_{j=1}^{M-m} X_{(j)}\quad\mbox{and}\quad
g(X)=\sum_{j=M-m+1}^{M} X_{(j)}.
\]
Note that both $f$ and $g$ are increasing functions in the sense that if 
$x=(x_1,\ldots,x_n)$ and $y=(y_1,\ldots,y_n)$ satisfy $x_i \leq y_i$
for every $i=1,\ldots,n,$ then $f(x)\leq f(y)$ and $g(x)\leq g(y)$.
Thus also $f(X)-E[f(X)]$ and $g(X)-E[g(X)]$ are increasing
functions.  It follows from Harris' inequality
that 
\begin{multline*}
E\big[(f(X)-E[f(X)]) (g(X)-E[g(X)])\big]\\\geq
E\big[f(X)-E[f(X)]\big]E\big[g(X)-E[g(X)]\big]=0,
\end{multline*}
that is to say
\[
\mathrm{Cov}\Big(\sum_{j=1}^{M-m} X_{(j)},\sum_{j=M-m+1}^{M} X_{(j)}\Big)\geq0.
\]
It follows that 
\[
\begin{split}
\var\Big(\sum_{j=1}^MX_j\Big)&=
\var\Big(\sum_{j=1}^{M-m}X_{(j)}+\sum_{j=M-m+1}^{M}X_{(j)}\Big)\\
&=\var\Big(\sum_{j=1}^{M-m}X_{(j)}\Big)+
\var\Big(\sum_{j=M-m+1}^{M}X_{(j)}\Big) \\
& \ \ + 2 \mathrm{Cov}\Big(\sum_{j=1}^{M-m} X_{(j)},\sum_{j=M-m+1}^{M}
X_{(j)}\Big)\\
&\geq \var\Big(\sum_{j=1}^{M-m}X_{(j)}\Big).
\end{split}
\]
\end{proof}

Setting $m=1$ in~\eqref{trim_eq} we deduce 
that $E[X_{(M)}]$ is of order at most $M^{1/p}$
when the $X_i$ have finite $p$:th moment.  
Results of this type, usually formulated 
for $p=2$, go back 
to~\cite{arnold79,gumbel54,hartle54}

Note that~\eqref{trim_eq}
is in some sense sharpest when
$A=\{M-m+1,\dotsc,M\}$ because then the sum consists of the $m$
largest terms;  this is the case we will be using.

\section{Proof of Theorem~\ref{coexist_thm}} \label{sec_thm1}

Clearly (by~\eqref{eqn1}) 
$\zeta=0$ if either $\a\leq0$ or $\b\leq0$, so we assume
henceforth that $\a,\b>0$.
The proof of Theorem~\ref{coexist_thm} will be divided into the three 
cases (i) $\a<\b$, (ii) $\a=\b$ and (iii) $\a>\b$.  

\begin{proof}[The case $\a<\b$]
The intuition is that if coexistence
were to take place, then $Y(t)$ would eventually be \emph{much}
larger than $X(t)$;  but then there is a good
chance that all healthy cells are infected in, say, time 1, which
would contradict coexistence.
To make this intuition exact, let $c=(\a+\b)/2$, $\d=(\b-\a)/4>0$, 
and use~(\ref{eqn1}) to see that
\begin{equation}\label{tu_eq}
\begin{split}
P(T_\ru=\infty)&=
P(T_\ru=\infty, \exists t_0: \hat X(t)\leq e^{(c-\d)t}
<e^{(c+\d)t}\leq \hat Y(t), \  \forall t\geq t_0)\\
&\leq P(\exists t_0: 0<X(t)\leq e^{(c-\d)t}
<e^{(c+\d)t}\leq Y(t), \  \forall t\geq t_0),
\end{split}
\end{equation}
since on $\{T_\ru=\infty\}$
we have that $0<X(t)\leq \hat X (t)$
and $Y(t)=\hat Y(t)$ for every $t\geq 0$.
Trivially, the right hand side is bounded above by
$P( A_n \textrm{ i.o.})$ where
\[
A_n:=\{\forall t\in[n,n+1], 0<X(t) \leq e^{(c-\d)t}
<e^{(c+\d)t}\leq Y(t)\}.
\]
But $P( A_n \textrm{ i.o.})=0$ by Lemma~\ref{An_lem}.
\end{proof}

\begin{proof}[The case $\a=\b$]
For the case $\a=\b>0$ the intution is that there will typically be so
many infection events that $X$ {\em effectively} 
(i.e. counting losses due to infections) 
has a strictly larger rate of deaths
than $\hat X$, allowing us to essentially reduce this case to the
case $\a<\b$.  
Note that the process 
\[
R(t)=\frac{\hat Y(t)}{\hat X(t)}
=\frac{\hat Y(t)}{e^{\alpha t}}\frac{e^{\alpha t}}{\hat X(t)}
\]
converges almost surely to some random variable $R$,
since $\hat Y(t)/e^{\alpha t}$
and $\hat X(t)/e^{\alpha t}$ are nonnegative martingales.
The limit $R$ may be infinite, but on the event $\{T_\ru=\oo\}$
we have that $0<R<\oo$.  Furthermore, since $0$ is an
absorbing state for the process $R(t)$ we have (up to a null event) that
$\{T_\ru=\oo\}\seq\{\inf_{t\geq 0}R(t)>0\}$.  It follows that for 
each $r>0$ we have 
\[
\{T_\ru=\oo\}\seq \{0<\inf_{t\geq 0}R(t)<r\}\cup G_r,
\]
where 
\[
G_r=\Big\{\frac{\hat Y(t)}{\hat X(t)}\geq r \ \forall t\geq0\Big\} 
\cap \{X(t)Y(t)>0 \ \forall t\geq0\}.
\]
For each $\d>0$ we may choose $r>0$ sufficiently small so that
$P(0<\inf_{t\geq 0}R(t)<r)\leq\d$ and thus
$\zeta\leq \d+P(G_r)$.  We aim to show that $P(G_r)=0$
for each $r>0$;  since $\d>0$ was arbitrary this will complete
the proof.

Fix $\d,r>0$ as above.
We will couple $X$, $\hat X$ and $Y$ to a new
process $X'$ which is obtained by taking into account
some of the effect of $Y$ on $X$.  The process $X'(t)$
will be a Markov branching process and will satisfy 
$X'(t)\leq \hat X(t)$ for all $t\geq 0$.  Moreover,
on the event $G_r$
we will have that $X(t)\leq X'(t)$ for all $t\geq 0$.
We let $X'(0)=X(0)$.
The rates governing the quadruple $(X,X',\hat X,Y)$ are given
in Table~\ref{coex_tab}, where we have written 
\[
\kappa=\kappa(x',y)=\Big(r\frac{x'}{y}\Big)\wedge 1.
\]

\begin{table}[hbt]
\centering
\begin{tabular}{l|l|l} 
Transition to state & Rate & For\\ \hline
 $(x+k-1,x'+k-1,\hat x+k-1,y)$ & $(x\wedge x') p_k$ & $k\geq 0$\\
 $(x+k-1,x',\hat x+k-1,y)$ & $(x-x\wedge x') p_k$ & $k\geq0$\\
 $(x,x'+k-1,\hat x+k-1,y)$ & $(x'-x\wedge x') p_k$ & $k\geq0$\\
 $(x,x',\hat x+k-1,y)$ & $(\hat x-x\vee x') p_k$ & $k\geq0$\\
 $(x,x',\hat x,y+k-1)$ & $y p_k$ & $k\geq1$\\
$(x,x',\hat x,y-1)$ & $y (p_0+\l)\g_0$ & \\
 $(x-(x\wedge k),x',\hat x,y-1+(x\wedge k))$ &
$y(p_0+\l)\g_k(1-\kappa)$ & $k\geq1$\\
 $(x-(x\wedge k),x'-1,\hat x,y-1+(x\wedge k))$ &
$y(p_0+\l)\g_k \kappa$ & $k\geq1$\\
 $(x,x'-1,\hat x,y)$ & $(rx'-\kappa y)(p_0+\l)(1-\g_0)$ & 
\end{tabular}
\caption{Transition rates in the coupled chain 
$(X,X',\hat X,Y)$.
Rates are given for transitions from a state $(x,x',\hat x,y)$ and 
are valid for all $x,x',\hat x,y \geq 0.$}
\label{coex_tab} 
\end{table}

We note from Table~\ref{coex_tab} that the triple
$(X,\hat X,Y)$ has the correct marginal distribution, i.e.\! as 
described in Section~\ref{auc_sec}.
For example, summing the first
two lines the of the table gives the rate 
$(x\wedge x'+x-x\wedge x')p_k=xp_k$
for the transition
 $x\rightarrow x+k-1$.  
Similarly, $\hat x\rightarrow\hat x+k-1$ at rate given by the sum of
the first four lines, and using that 
$x+x'-x\wedge x'=x\vee x'$ we get the correct rate $\hat xp_k$.

Consider now the marginal distribution for $X'$.  First note
that, since $\kappa\leq rx'/y$, the final rate in Table~\ref{coex_tab}
is nonnegative.  Adding the rates for the transitions
$x'\rightarrow x'-1$, we find that this transition occurs
at rate
\[
x'(p_0+r(p_0+\l)(1-\g_0)).
\]
Together with the rates for $x'\to x'+k-1$ for $k\geq 1$, this 
means that $X'(t)$ is a Markov branching process with
lifelength intensity
\[
1+r(p_0+\l)(1-\g_0)
\]
and offspring distribution $p'$ given by
\begin{equation*}
\begin{split}
p'_0&=\frac{p_0+r(p_0+\l)(1-\g_0)}{1+r(p_0+\l)(1-\g_0)},\\
p'_k&=\frac{p_k}{1+r(p_0+\l)(1-\g_0)},
\quad k\geq 1.
\end{split}
\end{equation*}
In particular, the Malthusian parameter of $X'$ is
\begin{equation*}
\begin{split}
\a'&=(1+r(p_0+\l)(1-\g_0))(\bar{p'}-1)\\
&=\a-r(p_0+\l)(1-\g_0)\\
&<\a\quad\mbox{for }r>0.
\end{split}
\end{equation*}
Clearly $X'(t)\leq\hat X(t)$ for all $t\geq0$.
On the event $G_r$ we also have that $\hat Y(t)= Y(t)$
for all $t\geq0$ and that $\hat Y(t)/\hat X(t)\geq r$
for all $t\geq0$.  It follows that, on $G_r$, we have that
\[
r\leq \frac{\hat Y(t)}{\hat X(t)}=
\frac{Y(t)}{\hat X(t)}\leq \frac{Y(t)}{X'(t)}\quad
\mbox{for all } t\geq 0,
\]
so that $rX'(t)/Y(t)\leq1$ and hence 
$\kappa(X'(t),Y(t))=rX'(t)/Y(t)$.  Thus the final rate
in Table~\ref{coex_tab} is always 0 on the event
$G_r$, and hence so is the second rate. Therefore, we get that $G_r\seq\{X(t)\leq X'(t)\,\forall t\geq 0\}$.

Let $c=(\a+\a')/2$ and $\d=(\a-\a')/4>0$.   Using~\eqref{eqn1} we therefore
deduce that
\begin{equation}\label{r3_eq}
P(G_r)\leq P(\exists t_0: 0<X(t)\leq X'(t)\leq e^{(c-\d)t}
<e^{(c+\d)t}\leq Y(t), \  \forall t\geq t_0).
\end{equation}
By Lemma~\ref{An_lem}, the probability on the right equals zero.
Since $\d>0$ was arbitrary it follows that $\zeta=0$.
\end{proof}

We are now ready to prove the final case of Theorem \ref{coexist_thm}.

\begin{proof}[The case $\a>\b$]
The intuition here is that $X(t)$
`wants' to be of the order $e^{\a t}$ and $Y(t)$ `wants' to be of the,
much smaller, order $e^{\b t}$.  Typically, therefore, the infection
will have very little impact on the healthy population.

To make this intuition rigorous, let
\begin{equation*}
a_n=\prod_{k=2}^n\Big(1-\frac{2}{k^2}\Big),\quad
b_n=\prod_{k=2}^n\Big(1-\frac{1}{k^2}\Big),\quad
c_n=\prod_{k=2}^n\Big(1+\frac{1}{k^2}\Big).
\end{equation*}
Note that $a_n$ and $b_n$ form decreasing sequences with limits in
$(0,1)$ and that $c_n$ is an increasing sequence with limit in
$(1,\oo)$. 
Write $B_n$ for the event that
\begin{equation*}
X(n)\geq a_n e^{\a n}\mbox{ and }
b_n e^{\b n}\leq Y(n)\leq c_n e^{\b n}.
\end{equation*}
We will prove that there is some $N$ such that
\begin{equation} \label{eqn_Bn}
P(B_{n+1}\mid B_n)\geq 1-\frac{3}{n^2}
\end{equation}
for all $n\geq N$.  This  will, using the Markov property, establish
the result, since $P(B_N)>0$ and 
\[
\zeta\geq P(\cap_{n\geq N} B_n)=P(B_N)\prod_{n\geq N}P(B_{n+1}\mid B_{n})>0.
\]

We start by observing that (again using that $Y(t)=\hat Y(t)$ whenever $X(t)>0$)
\begin{eqnarray} \label{eqn_3parts}
\lefteqn{P(B_{n+1} \mid B_n)}\\
& & =P(X(n+1)\geq a_{n+1} e^{\a (n+1)}, b_{n+1}e^{\b(n+1)}\leq \hat Y(n+1)\leq c_{n+1}e^{\b(n+1)} \mid B_n)\nonumber  \\
& & \geq 1-P(X(n+1)<a_{n+1} e^{\a (n+1)}\mid B_n) 
-P\big(\hat Y(n+1) < b_{n+1}e^{\b(n+1)} \mid B_n \big) \nonumber \\
& & \ \ -P\big(\hat Y(n+1)> c_{n+1}e^{\b(n+1)}\mid B_n \big). \nonumber
\end{eqnarray}

We will proceed to show that all three probabilities on the right hand side are small.
To prove that $P(X(n+1)<a_{n+1} e^{\a (n+1)}\mid B_n)$ is small, 
let $\Phi_n$ denote the number of infection attempts during the time
interval $[n,n+1]$, as in the proof of 
Lemma~\ref{An_lem}.  We will first show that $\Phi_n$
will typically be much smaller than $X(n)$, and will deduce from this
the required lower bound on $X(n+1)$.  For the bound on $\Phi_n$, we
use Markov's inequality to see that
\begin{equation}\label{m2}
P(\Phi_n\geq c_ne^{\b n}\cdot (n+1)^2E(\Phi)\mid B_n)
\leq \frac{E(\Phi_n\mid B_n)}{c_ne^{\b n}\cdot (n+1)^2E(\Phi)}
\leq \frac{1}{(n+1)^2},
\end{equation}
where $\Phi$ is the random variable of Lemma~\ref{expect_lem} and we
used the fact that, given $B_n$, the number $\Phi_n$ of infection attempts is
dominated by the sum of $c_n e^{\b n}$ independent copies of $\Phi$.

Let $M=M(n)=a_ne^{\a n}$ and $m=m(n)=c_ne^{\b n}(n+1)^2E(\Phi)$
(so $X(n)\geq M$ on $B_n$, and $m$ is the quantity
in~\eqref{m2}).
Let $(U_j)_{1\leq j \leq M}$ denote independent copies of the random variable $U$ of
Lemma~\ref{expect_lem}. The lower bound on $X(n+1)$ will be
obtained by noting that the impact of infection during the
time interval $[n,n+1]$ can be no larger than the effect of removing,
at time $n$, those $\Phi_n$ healthy cells that would otherwise give
rise to the largest ancestry at time $n+1$.  In particular,
\begin{equation}
X(n+1)\geq \sum_{j=1}^{X(n)-\Phi_n} U_{(j)},
\end{equation}
where $U_{(1)}\leq U_{(2)}\leq\cdots\leq U_{(M)}$
denote the order statistics of $U_1,\dotsc,U_{M}$ as in
Section~\ref{orderstat_sec}.
  For $n$ large
enough we have $M\geq m$, and on the event
$B_n\cap\{\Phi_n\leq m\}$ we have
\begin{equation}
X(n+1)\geq \sum_{j=1}^{M-m} U_{(j)}.
\end{equation}
Recall that $E(U_j)=e^\a$.
From the first part of Lemma~\ref{trim_lem}, we have that 
\[
E\Big[\sum_{j=M-m+1}^{M}U_{(j)}\Big]=
O\Big(\sqrt{mM}\Big)
=O\left(M\frac{n}{e^{(\a-\b)n/2}}.
\right)
\]
Observe that $ a_{n+1}e^{\a (n+1)}=\big(1-\tfrac{2}{(n+1)^2}\big)M\cdot e^\a$
and that for large enough $n$ we have that
\begin{equation}\label{l1}
\begin{split}
P\Big(\sum_{j=1}^{M-m}U_{(j)}&<
\big(1-\tfrac{2}{(n+1)^2}\big)M\cdot e^\a\Big)=
P\Big(\sum_{j=1}^{M-m}U_{(j)}-E\Big[\sum_{j=1}^{M}U_{j}\Big]<
-\tfrac{2Me^\a}{(n+1)^2}\big)\Big)\\
&=P\Big(\sum_{j=1}^{M-m}U_{(j)}-E\Big[\sum_{j=1}^{M-m}U_{(j)}\Big]<
-\tfrac{2Me^\a}{(n+1)^2}+E\Big[\sum_{j=M-m+1}^{M}U_{(j)}\Big]\Big)\\
&\leq
P\Big(\sum_{j=1}^{M-m}U_{(j)}-E\Big[\sum_{j=1}^{M-m}U_{(j)}\Big]<
-\tfrac{Me^\a}{(n+1)^2}\Big).
\end{split}
\end{equation}
By Chebyshev's bound~\eqref{chebyshev} and the first part of 
Lemma~\ref{trim_lem},
\[
\begin{split}
P\Big(\sum_{j=1}^{M-m}U_{(j)}-E\Big[\sum_{j=1}^{M-m}U_{(j)}\Big]<
-\tfrac{Me^\a}{(n+1)^2}\Big)&\leq
\frac{(n+1)^4\var\Big(\sum_{j=1}^{M-m}U_{(j)}\Big)}{e^{2\a}M^2}\\
&\leq \frac{(n+1)^4\var(U_1)}{e^{2\a}M}=O(e^{-\a n}).
\end{split}
\]
Taking into account also~\eqref{m2}
it follows that
\begin{eqnarray*}
\lefteqn{P\big(X(n+1)\geq a_{n+1}e^{\a(n+1)}\mid B_n\big)}\\
& & \geq P\big(X(n+1)\geq a_{n+1}e^{\a(n+1)} \mid \Phi_n \leq m, B_n\big)
P\big(\Phi_n \leq m \mid B_n\big) \\
& & \geq P\Big(\sum_{j=1}^{M-m} U_{(j)}  \geq a_{n+1}e^{\a(n+1)} \Big)
\Big(1-\frac{1}{(n+1)^2}\Big) \\
& & \geq \big(1-O(e^{-\alpha n})\big)\Big(1-\frac{1}{(n+1)^2}\Big)
\geq 1-\frac{2}{(n+1)^2},
\end{eqnarray*}
for $n$ large enough.

We proceed with the second and third probabilities on the right hand side of (\ref{eqn_3parts}).
We have, with $V_j$ independent and having the distribution of $V$ 
in Lemma \ref{expect_lem}, using that $E(V)=e^{\b}$, (\ref{chebyshev2}) and that $Y(t)=\hat Y(t)$
whenever $X(t)>0,$
\begin{eqnarray*}
\lefteqn{P\big(\hat Y(n+1) < b_{n+1}e^{\b(n+1)} \mid B_n \big)}\\
& & =P\big(\hat Y(n+1) < b_{n+1}e^{\b(n+1)} \mid X(n)\geq a_n e^{\a n},
b_{n}e^{\b n}\leq\hat Y(n)\leq c_{n}e^{\b n}\big) \\
& & =P\big(\hat Y(n+1) < b_{n+1}e^{\b(n+1)} \mid 
b_{n}e^{\b n}\leq\hat Y(n)\leq c_{n}e^{\b n}\big) \\
& & \leq P\big(\hat Y(n+1) < b_{n+1}e^{\b(n+1)} \mid 
\hat Y(n)=b_{n}e^{\b n}\big) \\
& & =P\Big(\sum_{j=1}^{b_ne^{\b n}}V_j < b_{n+1}e^{\b(n+1)} \Big) \\
& & =P\Big(\sum_{j=1}^{b_ne^{\b n}}V_j< \Big(1-\frac{1}{(1+n)^2}\Big)b_ne^{\b n} E(V) \Big) \\
& & \leq \frac{1}{b_ne^{\b n}}\frac{(1+n)^4\var(V)}{E(V)^2}=O(e^{-\b n}).
\end{eqnarray*}
Similarly, but using (\ref{chebyshev}) in place of (\ref{chebyshev2}) ,
\begin{eqnarray*}
\lefteqn{P\big(\hat Y(n+1)> c_{n+1}e^{\b(n+1)}\mid B_n \big)}\\
& &  \leq P\big(\hat Y(n+1)> c_{n+1}e^{\b(n+1)}\mid \hat Y(n)=c_{n}e^{\b n}\big) \\
& & =P\Big(\sum_{j=1}^{c_ne^{\b n}}V_j > c_{n+1}e^{\b(n+1)} \Big)\\
& & =P\Big(\sum_{j=1}^{c_ne^{\b n}}V_j >\Big(1+\frac{1}{(1+n)^2}\Big)c_{n}e^{\b n}E(V) \Big) \\
& & \leq \frac{1}{c_ne^{\b n}}\frac{(1+n)^4\var(V)}{E(V)^2}=O(e^{-\b n}).
\end{eqnarray*}
We conclude that (\ref{eqn_Bn}) holds for $n$ large enough.
\end{proof}

\section{Proof of Theorem~\ref{thmfinexp}} \label{sec_thm2}

The proof of Theorem~\ref{thmfinexp}
will be in two parts.
\begin{proof}[The case $\alpha<0$]
It is well known (see~~\cite[Theorem~11.1]{harris63}) 
that the probability that
a subcritical branching process survives until time $t>0$ 
decays exponentially fast in $t$. That is, there exists
$c>0$ such that for every $t>0,$
\[
P(X(t)>0)\leq e^{-ct}.
\]
Letting $T_X=\inf\{t:X(t)=0\}$ we get that
$E[T_\ru]\leq E[T_X]<\oo$.
\end{proof}

\begin{proof}[The case $0<\alpha<\beta$]
Similarly to (\ref{tu_eq}) let $c=(\a+\b)/2$ and
$\d=(\b-\a)/4$, and note that $c-\d=\a+\d$ and 
$c+\d=\b-\d$.  We have that 
\begin{eqnarray} \label{eqn3}
\lefteqn{P(T_\ru \geq \tau)}\\
& & =P(T_\ru \geq \tau, \hat X(t)\leq e^{(c-\d)t}
<e^{(c+\d)t}\leq \hat Y(t), \  \forall t\geq \tau/2)\nonumber\\
& & \ \ +P(T_\ru \geq \tau, \{ \hat X(t)\leq e^{(c-\d)t}
<e^{(c+\d)t}\leq \hat Y(t), \  \forall t\geq \tau/2\}^c)\nonumber\\
& & \leq P(T_\ru \geq \tau, \hat X(t)\leq e^{(c-\d)t}
<e^{(c+\d)t}\leq \hat Y(t), \  \forall t\geq \tau/2)\nonumber\\
& & \ \ +P(T_\ru \geq \tau, \exists t\geq \tau/2: \hat X(t) \geq e^{(\a+\d)t}
\mbox{ or } \exists t\geq \tau/2: 0<\hat Y(t)<e^{(\b-\d)t})\nonumber\\
& & \leq P(T_\ru \geq \tau, \hat X(t)\leq e^{(c-\d)t}
<e^{(c+\d)t}\leq \hat Y(t), \  \forall t\geq \tau/2)\nonumber\\
& & \ \ +P(\exists t\geq \tau/2: \hat X(t) \geq e^{(\a+\d)t}
\mbox{ or } \exists t\geq \tau/2:0<\hat Y(t)<e^{(\b-\d)t}).\nonumber
\end{eqnarray}
For the first part of the right hand side of (\ref{eqn3}),
we consider (for simplicity) first the case $\tau=2n,$ where we get
\begin{eqnarray*}
\lefteqn{P(T_\ru \geq \tau, \hat X(t)\leq e^{(c-\d)t}
<e^{(c+\d)t}\leq \hat Y(t), \  \forall t\geq \tau/2)}\\
& & =P(T_\ru \geq 2n, \hat X(t)\leq e^{(c-\d)t}
<e^{(c+\d)t}\leq \hat Y(t), \  \forall t\geq n) \\
& & \leq P( 0<X(t)\leq e^{(c-\d)t}
<e^{(c+\d)t}\leq Y(t), \  \forall t\in [n,n+1])=P(A_n),
\end{eqnarray*}
where $A_n$ is as in Lemma~\ref{An_lem}. 
According to that lemma, there exists a $c_2>0$ such that for any $n,$
we have that $P(A_n) \leq e^{-2 c_2 n}=e^{-c_2 \tau}.$
It is easy to see that the same holds for all 
$\tau$ (adjusting $c_2$ if necessary).

For the second part of the right hand side of (\ref{eqn3}), we use 
Lemma \ref{lemma_expdecay}, to conclude that there exists a 
$c_1=c_1(\d)>0$ such that for any $\tau,$
\begin{eqnarray*}
\lefteqn{P(\exists t\geq \tau/2: \hat X(t) \geq e^{(\a+\d)t} \cup
\exists t\geq \tau/2:0<\hat Y(t)<e^{(\b-\d)t})}\\
& & \leq P(\exists t\geq \tau/2: \hat X(t) \geq e^{\a t+\d \tau/2})
+P(\exists t\geq \tau/2:0<\hat Y(t)<e^{(\b-\d)t})\\
& & \leq e^{-\d \tau/2}+e^{-c_1\tau }.
\end{eqnarray*}

We conclude that there exists $c_3>0$ such that 
$P(T_\ru \geq t)\leq e^{-c_3t}$ for any $t>0,$
and so $E[T_\ru]<\oo$.
\end{proof}

\begin{remark}\label{inf_rk}
Clearly $E[T_\ru]=\oo$ when $\a>\b>0$, since then $T_\ru$
takes value $\oo$ with positive probability.
We have not been able to determine in general
whether or not $E[T_\ru]$
is finite in the remaining case $\a=\b$, but in the following
special case it is easily seen to be finite.
Suppose $\a=\b=0$, $\g_0=1$ and $\l=0$. Then $X$ and $Y$ form
independent critical branching processes.  The extinction
times $T_X$ and $T_Y$ for these respective processes
satisfy
\[
P(T_X>t)\sim\frac{1}{t},\quad
P(T_Y>t)\sim\frac{1}{t};
\]
see~\cite[p.~159]{athreya_ney}.  Thus $T_\ru=\min\{T_X,T_Y\}$
satisfies
\[
P(T_\ru>t)=P(T_X>t)P(T_Y>t)\sim\frac{1}{t^2}
\]
so
\[
E[T_\ru]=\int_0^\oo P(T_\ru>t)dt\leq 1+\int_1^\oo P(T_\ru>t)dt
\sim1+\int_1^\oo\frac{dt}{t^2}<\oo.
\]
\end{remark}

\section{Applications of the main results} \label{sec_app}

In this section we will briefly discuss some applications of our main theorems.
Using our results on coexistence 
we are able to comment more on the issue of extinction of $Y$,
which was the main focus of~\cite{BBBN}.

Central to the analysis in the present article were the auxiliary processes
$\hat X$ and $\hat Y.$ Recall that $\hat Y$ was in essence 
the process $Y$ in an `infinite
sea of food', i.e. $X(0)=\infty.$ However, if instead $X(t)=0,$ 
then $(Y(t+s))_{s\geq 0}$ has no healthy cells
to feed on, and therefore $(Y(t+s))_{s\geq 0}$ grows at the exponential rate 
(see also (\ref{beta_alpha}))
\[
\beta'=\bar p-1-\lambda=\alpha-\lambda.
\]
The qualitative behavior of $(X(t),Y(t))_{t\geq 0}$ depends on the values of
$\alpha,\beta$ and $\beta'.$ We discuss the possible different regimes. \\

\noindent{\bf Regime 1.} If $\alpha\leq 0$ then $(X(t))_{t\geq 0}$ eventually dies out,
and since $\beta' \leq \alpha,$ so does $(Y(t))_{t\geq 0}.$ Hence
$\eta=1$. \\

\noindent{\bf Regime 2.} 
If $0<\alpha \leq \beta$ then if $\gamma_0>0$ it might be the case
that $(Y(t))_{t\geq 0}$ dies out spontaneoulsy. However, if it does not, then 
according to Theorem~\ref{coexist_thm}, instead $(X(t))_{t\geq 0}$ will go extinct. 
If $\beta'\leq 0,$ we then conclude that also
$(Y(t))_{t\geq 0}$ dies out, that is $\eta=1$. 
However, if $\beta'> 0$ then $(Y(t))_{t\geq 0}$ can survive
on its own, that is $\eta<1$.\\

\noindent{\bf Regime 3.} 
If $0<\beta<\alpha$ we are in the coexistence regime, in particular
$\eta<1$.   As 
stated in Theorem~\ref{coexist_thm}, it might be the case that $X(t)Y(t)>0$ 
for all $t>0.$ However, as in Regime 2, if $\gamma_0>0,$ it is possible that 
$(Y(t))_{t\geq 0}$ dies out. Furthermore, if $(X(t))_{t\geq 0}$ dies out, 
then the behavior of $(Y(t))_{t\geq 0}$ would again be governed by the sign 
of $\beta'.$\\

\noindent {\bf Regime 4.} If $\beta<0$ then $(Y(t))_{t\geq 0}$
eventually dies out, that is $\eta=1$.\\

We can draw qualitative conclusions from the above description,
using also~\eqref{beta_alpha}.
For instance, if we fix $\alpha>0$
and $E(\G)\geq 1$ it follows that $\alpha \leq \beta$ for 
every $\lambda\geq 0,$ and so we are always
in Regime 2. As long as
$\lambda$ is small enough, so that $\beta'>0,$ the process $(Y(t))_{t\geq 0}$ can survive.
This supports the intuition that small $\lambda$ is good for the long term survival
of $(Y(t))_{t\geq 0},$ see \cite{BBBN}.

If instead $\alpha>0$ while $E(\G)< 1$ we see that we are in Regime 2
for  small values of $\lambda$
and in Regime 3 for large values of $\lambda.$ 
Depending on the exact
values of $\alpha$ and $E(\G)$ we have the following possibilities:
\begin{itemize}
\item for small $\lambda$ we have $0<\alpha < \beta,$ and $\beta'>0$
so that $(Y(t))_{t\geq 0}$ might survive, that is $\eta<1$;
\item for slightly larger $\lambda$ we can have $0<\alpha < \beta,$
and $\beta'\leq0$ so that $(Y(t))_{t\geq 0}$ dies out, that is $\eta=1$;
\item for larger $\lambda$ we have $0<\beta<\alpha,$ so that
  $(Y(t))_{t\geq 0}$ might again survive, that is $\eta<1$;
\item for even larger $\lambda$ we have $\beta\leq 0$ so that
  $(Y(t))_{t\geq 0},$ again dies out, that is $\eta=1$.
\end{itemize}
In \cite{BBBN}, monotonicity of $\eta$ as a function 
of $\lambda$ was established when $\g_0=0.$
In contrast, we see here that monotonicity of $\eta$ in $\l$ may fail if
$E(\G)<1$ (and it is easy to find specific parameters 
for this to be the case). 
Note also the difference between the first case, in which $(Y(t))_{t\geq 0}$
is strong enough to survive on its own, and case three where $(Y(t))_{t\geq 0}$
{\em needs} the process $(X(t))_{t\geq 0}$ to feed on.

\bibliography{vip}
\bibliographystyle{plain}

\end{document}